\newtheorem{thm}{Theorem}
\newtheorem{lem}{Lemma}
\newtheorem{conj}{Conjecture}
\newtheorem{rem}{Remark}
\title{On consecutive primitive elements in a finite field}
\author{
  Stephen D. Cohen \\
  School of Mathematics and Statistics, \\
  University of Glasgow, Scotland \\
  Stephen.Cohen@glasgow.ac.uk
\and
  Tom\'{a}s Oliveira e Silva \\
  Departamento de Electr{\'o}nica, Telecomunica{\c c}{\~o}es e Inform{\'a}tica / IEETA \\
  University of Aveiro, Portugal \\
  tos@ua.pt
\and
  Tim Trudgian\footnote{Supported by Australian Research Council DECRA Grant DE120100173.} \\
  Mathematical Sciences Institute \\
  The Australian National University, ACT 0200, Australia \\
  timothy.trudgian@anu.edu.au
}
\date{October 20, 2014}
\begin{document}

\maketitle

\begin{abstract}
  \noindent
  For $q$ an odd prime power with $q>169$ we prove that there are always three consecutive
  primitive elements in the finite field $\mathbb{F}_{q}$. Indeed, there are precisely eleven  values
  of $q \leq 169$  for which this is false.  For $4\leq n \leq 8$ we present
  conjectures on the size of $q_{0}(n)$ such that $q>q_{0}(n)$ guarantees the existence of $n$
  consecutive primitive elements in $\mathbb{F}_{q}$, provided that $\mathbb{F}_{q}$ has
  characteristic at least~$n$. Finally, we improve the upper bound on $q_{0}(n)$ for all
  $n\geq 3$.
\end{abstract}

\textit{AMS Codes: 11T30, 11N69}\\
\indent
\textit{Keywords: consecutive primitive roots, finite fields}

\section{Introduction}

Let $q$ be a prime power and consider primitive elements in $\mathbb{F}_{q}$, the finite field of
order~$q$. Cohen~\cite{Cohen2,Cohen:Last,Cohen:1985} proved that $\mathbb{F}_{q}$ contains two consecutive
distinct primitive elements whenever~$q>7$. For $n\geq 2$ we wish to determine $q_{0}(n)$ such
that $\mathbb{F}_{q}$, assumed to have characteristic larger than or equal to $n$, contains $n$
consecutive distinct primitive elements for all~$q>q_{0}(n)$.

Carlitz~\cite{Carlitz} showed that $q_{0}(n)$ exists for all~$n$. Tanti and
Thangadurai~\cite[Thm.\ 1.3]{Tanti} showed that
\begin{equation}\label{tanti}
  q_{0}(n) \leq \exp(2^{5.54 n}), \quad (n\geq 2).
\end{equation}
When $n=3$ this gives the enormous bound~$10^{43743}$. The main point of this article is to apply
techniques from~\cite{Cohen:2014} to prove
\begin{thm}\label{water}
  The finite field $\mathbb{F}_{q}$ contains three consecutive primitive elements for all odd
  $q>169$.  Indeed, the only fields $\mathbb{F}_{q}$ (with $q$ odd) which do \emph{not} contain three consecutive primitive elements
  are those for which $q=$ $3$, $5$, $7$, $9$, $13$, $25$, $29$, $61$, $81$, $121$, or $169$.
\end{thm}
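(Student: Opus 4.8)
The plan is to use a character-sum sieve to estimate the number of $x\in\mathbb{F}_q$ for which $x$, $x+1$, and $x+2$ are simultaneously primitive, and to show this count is positive once $q>169$. The standard tool is the characteristic function for primitivity: for $x\neq 0$, the indicator that $x$ is primitive equals $\theta(q-1)\sum_{d\mid q-1}\frac{\mu(d)}{\varphi(d)}\sum_{\mathrm{ord}(\chi)=d}\chi(x)$, where $\theta(q-1)=\varphi(q-1)/(q-1)$ and the inner sum runs over multiplicative characters $\chi$ of order exactly $d$. First I would write the desired count $N$ as a triple sum over $x$ of the product of three such indicators (for $x$, $x+1$, $x+2$), which expands into a weighted sum of mixed character sums $\sum_{x}\chi_1(x)\chi_2(x+1)\chi_3(x+2)$.

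The next step is to isolate the main term, coming from the all-principal-character contribution $d_1=d_2=d_3=1$, which gives approximately $\theta(q-1)^3 q$, and to bound every remaining term. For the nontrivial mixed sums I would invoke Weil's bound, so that each such sum is $O(\sqrt{q})$; the usual formulation shows $\left|\sum_{x}\chi_1(x)\chi_2(x+1)\chi_3(x+2)\right|\leq 2\sqrt{q}$ (or a comparable constant) whenever not all $\chi_i$ are principal. Summing the absolute values over all divisor triples introduces a factor involving $W(q-1)$, the number of squarefree divisors of $q-1$ (equivalently $2^{\omega(q-1)}$), so the error is bounded by roughly $\theta(q-1)^3\cdot 2\sqrt{q}\,\bigl(W(q-1)\bigr)^{3}$, give or take the precise constant from the Weil estimate and the handling of the boundary terms where some $x+i=0$.

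The heart of the argument, following the sieving technique of~\cite{Cohen:2014}, is then a prime-sieve refinement to control the $\bigl(W(q-1)\bigr)^{3}$ blow-up, which is far too lossy to reach the sharp threshold $169$ on its own. The idea is to fix a set of ``small'' primes dividing $q-1$ and treat only those exactly, while the remaining primes are incorporated through a sieve inequality that replaces the full $2^{\omega}$ factor by a much smaller quantity depending on $\delta=1-\sum_{p}1/p$ over the sieving primes and on the number of sieving primes. After this reduction one obtains a criterion of the shape ``$N>0$ provided $\sqrt{q}>C(q-1)$'' for an explicit function $C$; combined with the standard observation that $\omega(q-1)$ is small unless $q$ is large, this yields a finite, checkable list of possible exceptional $q$. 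The main obstacle is precisely this final reconciliation: the clean Weil-plus-sieve bound will only settle $q$ beyond some moderately large explicit cutoff, so the genuinely hard part is verifying by direct (computer-assisted) search the finitely many remaining $q$ in the intermediate range, and confirming that the eleven listed values of $q\leq169$ are the only genuine exceptions while all other small $q$ do contain three consecutive primitive elements.
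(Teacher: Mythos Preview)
Your proposal is correct and follows essentially the same route as the paper: the character-sum expansion for the triple-primitivity indicator, Weil's bound with constant $(n-1)=2$, the Cohen-style prime sieve to tame the $W(q-1)^3$ blow-up, and then a computer search over the residual finite range (the paper reduces to $\omega(q-1)\le 13$ and $q\le 3.49\times 10^{15}$). One small correction worth noting for when you carry this out: because each sieved prime must be removed from all three coordinates, the sieve parameter is $\delta = 1 - 3\sum_{i} p_i^{-1}$ rather than $1-\sum_i p_i^{-1}$, and this factor of $n=3$ matters for the positivity of $\delta$ and hence for how many primes you can afford to sieve.
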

When $n\geq 4$ we improve the estimate for $q_{0}(n)$ in
\begin{thm}\label{rum}
  The field $\mathbb{F}_{q}$, assumed to have characteristic at least $n$, contains $n$
  consecutive primitive elements provided that $q>q_{0}(n)$, where values of $q_{0}(n)$ are given
  in the third column of Table~$\ref{Surletable}$ for $4\leq n \leq 10$ and
  $q_{0}(n)=\exp(2^{2.77 n})$ for $n\geq 11$.
\end{thm}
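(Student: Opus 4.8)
The plan is to prove Theorem~\ref{rum} using the character-sum method that underlies the classical results of Carlitz and the refinement of Tanti--Thangadurai, but sharpening the constants to obtain the exponent $2.77$ in place of $5.54$. The indicator function for an element $a\in\mathbb{F}_q^{*}$ being primitive can be written as
\begin{equation*}
  \varpi(a)=\frac{\phi(q-1)}{q-1}\sum_{d\mid q-1}\frac{\mu(d)}{\phi(d)}\sum_{\chi_d}\chi_d(a),
\end{equation*}
where the inner sum runs over the multiplicative characters $\chi_d$ of exact order $d$. Applying this weight to each of the $n$ shifts $a, a+1, \dots, a+(n-1)$ and summing over $a$, the number $N_n$ of $a$ for which all $n$ consecutive elements are primitive becomes a weighted sum of multiplicative character sums of the form $\sum_a \chi_{d_1}(a)\chi_{d_2}(a+1)\cdots\chi_{d_n}(a+n-1)$. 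The principal term (all $d_i=1$) contributes $\bigl(\phi(q-1)/(q-1)\bigr)^n\,q$, and the remaining terms are error terms to be bounded.

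First I would bound each nonprincipal character sum by Weil's estimate, which gives $|\sum_a \chi_{d_1}(a)\cdots\chi_{d_n}(a+n-1)|\leq (n-1)\sqrt{q}$ whenever the product character is nontrivial; the characteristic hypothesis $p\geq n$ guarantees that the $n$ shifts are distinct elements of $\mathbb{F}_q$, so Weil applies cleanly. Collecting these bounds and using the standard divisor-sum device, the error is controlled by $\bigl(\phi(q-1)/(q-1)\bigr)^n (n-1)\sqrt{q}\,(W(q-1)-1)^n$, where $W(m)=2^{\omega(m)}$ counts the squarefree divisors of $m$. Hence $N_n>0$ is guaranteed once
\begin{equation*}
  \sqrt{q}>(n-1)\bigl(W(q-1)-1\bigr)^n,
\end{equation*}
which after taking logarithms and inserting the crude bound $W(q-1)\leq C\,q^{\,\delta}$ on the number of distinct prime factors yields a threshold of the shape $q_0(n)=\exp(2^{c n})$; optimising the sieving constant $c$ is where the improvement from $5.54$ to $2.77$ comes from.

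The genuinely technical part, and the main obstacle, is the sieving refinement needed to replace the naive factor $(W(q-1)-1)^n$ by something appreciably smaller. Rather than summing over all squarefree divisors $d\mid q-1$, I would employ the prime-sieve inequality (the $\delta$--sieve of Cohen used in~\cite{Cohen:2014}): one chooses a set of ``sieving primes'' dividing $q-1$ and expresses $\varpi$ in terms of a smaller set of characters plus a correction controlled by a parameter $\delta=1-\sum 1/p_i$, so that the effective error factor becomes roughly $2^s(\delta^{-1}+ \text{something})$ rather than exponential in $\omega(q-1)$. Carrying this through for each $n$ and balancing the number of sieving primes against the residual error produces the explicit entries in the third column of Table~\ref{Surletable} for $4\leq n\leq 10$, and for $n\geq 11$ the uniform bound $q_0(n)=\exp(2^{2.77n})$. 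The delicate point is ensuring the estimate is genuinely uniform in $q$: one must verify that the worst-case density of prime factors of $q-1$, together with the explicit Weil constant $n-1$, still fits under the claimed exponent for \emph{all} $q$ exceeding the stated threshold, rather than merely asymptotically.
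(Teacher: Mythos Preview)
Your outline has the right skeleton---indicator via characters, Weil bound $(n-1)\sqrt q$, basic criterion, then a sieve for the small-$n$ table---but the two halves of the argument are tangled together, and the key analytic input for the generic bound is missing.

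For $n\geq 11$, the paper does \emph{not} use the prime sieve at all. The bound $q_0(n)=\exp(2^{2.77n})$ comes directly from the basic criterion $q>(n-1)^2\,W(q-1)^{2n}$ (your $\sqrt q>(n-1)W^n$, essentially) together with Robin's explicit inequality $\omega(m)\leq 1.38402\,\log m/\log\log m$. Substituting Robin into the criterion gives $\log\log q>2n\cdot 1.38402\cdot\log 2$, and since $e^{2\cdot 1.38402\cdot\log 2}\approx 2^{2.77}$ one obtains the stated threshold. Your placeholder ``$W(q-1)\leq Cq^{\delta}$'' with a fixed $\delta$ cannot produce a double-exponential bound; you need the $\log\log q$ in the denominator. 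Moreover, the halving of the exponent from $5.54$ to $2.77$ is not a sieving optimisation: it is simply that the basic criterion here has $W^{2n}$ where Tanti--Thangadurai effectively had $W^{4n}$.

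The sieve is used only to produce the table entries for $4\leq n\leq 10$. There your description is close, but the sieve parameter must be $\delta=1-n\sum_{i}p_i^{-1}$, not $1-\sum_i p_i^{-1}$: each of the $n$ translates $g,g+1,\dots,g+n-1$ contributes a copy of the sieving primes, and missing that factor of $n$ would give numerics that are too optimistic. (A smaller slip: the error in the basic estimate runs over $W^n-1$ nonprincipal tuples, not $(W-1)^n$.)
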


\begin{table}[ht]
  \caption{Range of potential $q$ given a value of $n$}
  \label{Surletable}
  \centering
  \begin{tabular}{rrl}
    \hline\hline
     $n$ & Bound on $\omega(q-1)$ & Bound on $q$ ($q_0(n)$)            \\
    \hline\noalign{\vspace*{2pt}}
     $3$ &        $13\mkern 60mu$ & $\mkern 25mu   3.49\times 10^{15}$ \\
     $4$ &        $23\mkern 60mu$ & $\mkern 25mu   3.29\times 10^{32}$ \\
     $5$ &        $37\mkern 60mu$ & $\mkern 25mu   4.22\times 10^{61}$ \\
     $6$ &        $59\mkern 60mu$ & $\mkern 25mu  4.61\times 10^{113}$ \\
     $7$ &       $100\mkern 60mu$ & $\mkern 25mu  3.75\times 10^{220}$ \\
     $8$ &       $171\mkern 60mu$ & $\mkern 25mu  2.27\times 10^{425}$ \\
     $9$ &       $301\mkern 60mu$ & $\mkern 25mu  1.01\times 10^{836}$ \\
    $10$ &       $533\mkern 60mu$ & $\mkern 25mu 6.69\times 10^{1638}$ \\
    \hline
  \end{tabular}
\end{table}
We prove Theorem~\ref{rum} and discuss the construction of Table~\ref{Surletable} in
Section~\ref{brandy}.

We remark that the outer exponent in the bound in Theorem~\ref{rum} is half of that given
in~(\ref{tanti}) owing entirely to the superior sieving inequality used in Theorem~\ref{basicthm}.
The double exponent still gives an enormous bound on $q_{0}(n)$. Were one interested in bounds for
specific values of $n\geq 11$ one should extend Table \ref{Surletable} as per
Section~\ref{brandy}.

In Section~\ref{algo} we present an algorithm that, along with Theorem~\ref{rum}, proves Theorem~\ref{water}.

Whereas we are not able to resolve completely the values of $q_{0}(n)$ for $n\geq 4$, we present,
in Section~\ref{conj}, some conjectures as to the size of $q_{0}(n)$ for $4\leq n \leq 8$.

\section{Character sum expressions and estimates}

Let $\omega(m)$ denote the number of distinct prime factors of $m$ so that $W(m)=2^{\omega(m)}$ is
the number of square-free divisors of~$m$. Also, let $\theta(m)=\prod_{p|m}(1-p^{-1})$. For any
integer~$m$ define its radical $\mathrm{Rad}(m)$ as the product of all distinct prime factors
of~$m$.

Let $e$ be a divisor of~$q-1$. Call $g \in \mathbb{F}_{q}$ \emph{$e$-free} if $g \neq 0$ and
$g = h^d$, where $h \in \mathbb{F}_{q}$ and $d|e$ implies $d=1$. The notion of $e$-free depends
(among divisors of $q-1$) only on~$\mathrm{Rad}(e)$. Moreover, in this terminology a primitive
element of $\mathbb{F}_{q}$ is a $(q-1)$-free element.

The definition of any multiplicative character $\chi$ on $\mathbb{F}_{q}^*$ is extended to the
whole of $\mathbb{F}_{q}$ by setting $\chi(0)=0$. In fact, for any divisor $d$ of $\phi(q-1)$,
there are precisely $\phi(d)$ characters of order (precisely) $d$, a typical such character being
denoted by~$\chi_d$. In particular, $\chi_1$, the principal character, takes the value $1$ at all
non-zero elements of $\mathbb{F}_{q}$ (whereas $\chi_1(0)=0$). A convenient shorthand notation to
be employed for any divisor $e$ of $q-1$ is
\begin{equation}\label{ale}
  \int_{d|e} = \sum_{d|e}\frac{\mu(d)}{\phi(d)} \sum_{\chi_{d}},
\end{equation}
where the sum over $\chi_{d}$ is the sum over all $\phi(d)$ multiplicative characters $\chi_d$ of
$\mathbb{F}_{q}$ of exact order~$d$. Its significance is that, for any $g \in \mathbb{F}_{q}$,
\[
  \theta(e)\int_{d|e}\chi_d(g) =
  \left\{ \begin{array}{cl}
    1, & \mbox{if $g$ is non-zero and $e$-free}, \\
    0, & \mbox{otherwise.}
  \end{array} \right.
\]
In this expression (and throughout) only characters $\chi_d$ with $d$ square-free contribute (even
if $e$ is not square-free).

The present investigation concerns the question of the existence of $n \geq 2$ consecutive
\emph{distinct} primitive elements in $\mathbb{F}_{q}$, i.e., whether there exists
$g \in \mathbb{F}_{q}$ such that $\{g, g+1,\ldots, g+n-1\}$ is a set of $n$ distinct primitive
elements of~$\mathbb{F}_{q}$. Define $p$ to be the characteristic of $\mathbb{F}_q$, so
that $q$ is a power of $p$.   Then, necessarily, $n \leq p$ and, by Theorem~1
of~\cite{Cohen:2014}, we can suppose~$n \geq 3$. Assume therefore throughout that
$3 \leq n\leq p$. In particular, $q$ is odd.

Let $e_1, \ldots, e_n$ be divisors of $q-1$. (In practice all divisors will be \emph{even}.)
Define $N(e_1, \ldots, e_n)$ to be the number of (non-zero) $g \in \mathbb{F}_{q}$ such that
$g+k-1$ is $e_k$-free for each $k=1, \ldots, n$. The first step is the standard expression for
this quantity in terms of the multiplicative characters of~$\mathbb{F}_{q}^*$.
\begin{lem}\label{Nexpr}
  Suppose $3 \leq n\leq p$ and $e_1, \ldots, e_n$ are divisors of $q-1$. Then
  \[
     N(e_1, \ldots, e_n)= \theta(e_1)\ldots\theta(e_n)
       \int_{d_1|e_1}\ldots \int_{d_n|e_n}S(\chi_{d_1}, \ldots,\chi_{d_n}),
  \]
  where
  \[
    S(\chi_{d_1}, \ldots,\chi_{d_n})=
      \sum_{g \in \mathbb{F}_{q}}\chi_{d_1}(g) \ldots \chi_{d_n}(g+n-1).
  \]
\end{lem}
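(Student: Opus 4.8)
The plan is to build $N(e_1,\ldots,e_n)$ directly from the characteristic function for $e$-free elements displayed just before the lemma. First I would write the count as a sum over all $g \in \mathbb{F}_q$ of a product of $n$ indicator functions, one for each condition ``$g+k-1$ is $e_k$-free'':
\[
  N(e_1,\ldots,e_n) = \sum_{g\in\mathbb{F}_q}\prod_{k=1}^n \theta(e_k)\int_{d_k|e_k}\chi_{d_k}(g+k-1).
\]
By the stated property of $\theta(e)\int_{d|e}$, each factor equals $1$ exactly when $g+k-1$ is non-zero and $e_k$-free and $0$ otherwise, so the product is the indicator of the event that all $n$ conditions hold simultaneously. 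Summing over $g$ therefore reproduces the definition of $N(e_1,\ldots,e_n)$ verbatim.

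Next I would extract the constants $\theta(e_1),\ldots,\theta(e_n)$ from the product and interchange the (finite) sum over $g$ with the finitely many character sums packaged in the $\int$-notation of~(\ref{ale}). Since every sum in sight is finite, this rearrangement is unconditional; it pulls the summation over $g$ to the innermost position, leaving
\[
  N(e_1,\ldots,e_n) = \theta(e_1)\cdots\theta(e_n)\int_{d_1|e_1}\cdots\int_{d_n|e_n}\sum_{g\in\mathbb{F}_q}\chi_{d_1}(g)\cdots\chi_{d_n}(g+n-1),
\]
and the inner sum over $g$ is precisely $S(\chi_{d_1},\ldots,\chi_{d_n})$, which gives the claimed identity.

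There is essentially no analytic obstacle here, as everything reduces to a finite rearrangement, so the only points needing care are bookkeeping ones. I would check that the shifts $g, g+1,\ldots,g+n-1$ are genuinely distinct field elements --- guaranteed by the standing hypothesis $n\le p$, since then $1,\ldots,n-1$ are non-zero in $\mathbb{F}_q$ --- so that the $n$ constraints are honest conditions rather than collapsing onto one another. I would also invoke the remark that only square-free $d_k$ contribute, so that the $\int$-notation is well defined and the factors $\theta(e_k)$ supply the correct normalisation even when some $e_k$ is not square-free. The only real ``work'' is aligning the definition of $e$-free with the displayed orthogonality-type identity; the genuine substance of the argument lies entirely in the subsequent estimation of $S(\chi_{d_1},\ldots,\chi_{d_n})$.
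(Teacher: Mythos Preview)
Your proof is correct and is exactly the standard derivation the paper has in mind; indeed the paper states the lemma without proof, calling it ``the standard expression for this quantity in terms of the multiplicative characters of~$\mathbb{F}_{q}^*$''. Your write-up supplies precisely the bookkeeping (product of indicator functions via the displayed identity, then interchange of finite sums) that the authors suppress.
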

We now provide a bound on the size of $S(\chi_{d_1}, \ldots,\chi_{d_n})$.
\begin{lem}\label{weil}
  Suppose $d_1, \ldots, d_n$ are square-free divisors of $q-1$. Then
  \[
    S(\chi_{d_1}, \ldots,\chi_{d_n})=q-n \quad\mbox{if $d_1= \cdots= d_n=1$,}
  \]
  and otherwise
  \[
    \bigl|S(\chi_{d_1}, \ldots,\chi_{d_n})\bigr| \leq (n-1)\sqrt{q}.
  \]
\end{lem}

\begin{proof}
  We can assume not all of $d_1, \ldots, d_n$ have the value $1$. Then
  $d:= \mathrm{lcm}(d_1, \ldots, d_n)$ is also a square-free divisor of $q-1$ and  $d>1$.
  Evidently, there are positive integers $c_1,\ldots,c_n$ with $\gcd(c_k,d_k)=1$ such that
  \[
    S(\chi_{d_1}, \ldots,\chi_{d_n})= \sum_{g \in \mathbb{F}_{q}}\chi_{d}\bigl(f(g)\bigr),
  \]
  where $f(x) =x^{c_1}(x+1)^{c_2} \cdots(x+n-1)^{c_n}$. Since the radical of the polynomial $f$
  has degree $n$ the result holds by Weil's theorem (see Theorem 5.41
  in~\cite[page 225]{Lidl:1997} and also~\cite{Peng:2014}).
\end{proof}

When $e_1= e_2=\cdots =e_n=e$, say, we shall abbreviate $N(e, \ldots,e)$ to $N_n(e)$. We obtain a
lower bound for $N_{n}(e)$ in
\begin{lem}\label{NEst}
  Suppose  $3 \leq n\leq p$ and $e$ is a divisor of $q-1$. Then
  \[
    N_n(e) \geq \theta(e)^n \bigl(q-(n-1)W(e)^n\sqrt{q}\bigr).
  \]
\end{lem}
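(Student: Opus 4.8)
The plan is to apply Lemma~\ref{Nexpr} with $e_1=\cdots=e_n=e$ and then split the resulting $n$-fold character sum into its principal part, which yields the main term, and a remainder controlled by Lemma~\ref{weil}. First I would write out the integral notation~(\ref{ale}) explicitly, so that
\[
  N_n(e) = \theta(e)^n \sum_{d_1|e}\cdots\sum_{d_n|e}
    \frac{\mu(d_1)\cdots\mu(d_n)}{\phi(d_1)\cdots\phi(d_n)}
    \sum_{\chi_{d_1}}\cdots\sum_{\chi_{d_n}} S(\chi_{d_1},\ldots,\chi_{d_n}),
\]
where only square-free tuples contribute, since $\mu$ vanishes otherwise.

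Next I would isolate the single term with $d_1=\cdots=d_n=1$. Here each factor $\mu(d_k)/\phi(d_k)$ equals $1$, the only character of order $1$ is the principal one, and Lemma~\ref{weil} gives $S(\chi_1,\ldots,\chi_1)=q-n$. Thus this term contributes exactly $\theta(e)^n(q-n)$, which is the main term. For every remaining tuple---those not identically $1$---I would invoke the bound $|S(\chi_{d_1},\ldots,\chi_{d_n})|\leq(n-1)\sqrt{q}$ from Lemma~\ref{weil} and pull this factor out. The crucial bookkeeping is that for each index $\sum_{\chi_d}1=\phi(d)$ cancels the denominator $\phi(d)$, leaving $\sum_{d|e}|\mu(d)|=W(e)$; the $n$-fold product shows the total weight over all tuples is $W(e)^n$, so the weight attached to the non-principal tuples is $W(e)^n-1$. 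Combining gives
\[
  N_n(e) \geq \theta(e)^n\bigl((q-n)-(n-1)\sqrt{q}\,(W(e)^n-1)\bigr).
\]

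The last step is to verify that this majorizes the claimed bound, which after dividing by $\theta(e)^n$ reduces to the elementary inequality $(n-1)\sqrt{q}\geq n$. Under the standing hypotheses $n\leq p\leq q$, so $q\geq n\geq 3$; since $(n-1)^2\geq n$ for $n\geq 3$, we obtain $\sqrt{q}\geq\sqrt{n}\geq n/(n-1)$, as required. I expect this final verification, together with the correct weighting of the error term (getting the $W(e)^n-1$ factor and hence the saving $+(n-1)\sqrt{q}$ that absorbs the $-n$), to be the only places where genuine care is needed; the remainder is a routine application of Lemmas~\ref{Nexpr} and~\ref{weil}.
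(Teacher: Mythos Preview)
Your proof is correct and follows essentially the same route as the paper: isolate the principal tuple to get the main term $\theta(e)^n(q-n)$, bound the remaining $W(e)^n-1$ tuples via Lemma~\ref{weil}, and then absorb the $-n$ using $(n-1)\sqrt{q}\geq n$. The paper justifies this last inequality simply from $n\geq 3$ and $q\geq 3$ (indeed $n/(n-1)\leq 3/2\leq\sqrt{3}\leq\sqrt{q}$), whereas you use the slightly stronger fact $q\geq n$; both are fine.
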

\begin{proof}
  The presence of the M\"obius function in the integral notation in~(\ref{ale}) means that we need
  only concern ourselves with the square-free divisors $(d_{1}, \ldots, d_{n})$ of~$e$. The
  contribution of each $n$-tuple is given by Lemma~\ref{weil}. Hence,
  $N_n(e) \geq \theta(e)^n \bigl(q-n-(n-1)(W(e)^n-1)\sqrt{q}\bigr)$ and the result follows since
  $n\geq 3$ and $q \geq3$.
\end{proof}


Applying Lemma~\ref{NEst} with $e=q-1$ gives the basic criterion which guarantees $n$ consecutive
primitive elements for sufficiently large~$q$.
\begin{thm}\label{basicthm}
  Suppose $3 \leq n\leq p$. Suppose
  \begin{equation}\label{porter}
    q \geq (n-1)^2\,W(q-1)^{2n}=(n-1)^2\,2^{2n\omega(q-1)}.
  \end{equation}
  Then there exists a set of $n$ consecutive primitive elements in $\mathbb{F}_{q}$.
\end{thm}

As an application of Theorem~\ref{basicthm}, consider the case~$n=3$. Let $P_m$ be the product of
the first $m$ primes. A quick numerical computation reveals that for $m\geq 50$ one has
$P_m+1 \geq 2^{2+6m}$. Hence, it follows that $\mathbb{F}_q$ has three consecutive primitive
elements when $\omega(q-1)\geq 50$ or when $q\geq 2^{2+6\times 50}$. We shall soon improve this
markedly.

We now briefly present an improvement in the above discussion when $q \equiv 3\pmod 4$. The
improvement in this case, in which $-1$ is a non-square in $\mathbb{F}_{q}$, is related to a
device used in~\cite{Cohen:1985}. Note that, in Lemma~\ref{Nexpr}, in the definition of
$S(\chi_{d_1}, \ldots,\chi_{d_n})$ we may replace $g$ by $-g-(n-1)$ and obtain
\begin{align*}
  S(\chi_{d_1}, \ldots,\chi_{d_n}) & =
    \sum_{g \in \mathbb{F}_{q}} \prod_{i=1}^n \chi_{d_i}(-g-n+1+i-1)=
    \sum_{g \in \mathbb{F}_{q}} \prod_{i=1}^n \chi_{d_i}(-1)\chi_{d_i}(g+n-i) \\
    & = (\chi_{d_1}\chi_{d_2}\ldots \chi_{d_n})(-1) S(\chi_{d_n}, \ldots,\chi_{d_1}).
\end{align*}
In particular, if an \emph{odd} number of the integers $d_1, \ldots, d_n$ are even, then
$S(\chi_{d_n}, \ldots,\chi_{d_1})=-S(\chi_{d_1}, \ldots,\chi_{d_n})$. As a consequence, in
Lemma~\ref{Nexpr}, if $e_1= \cdots =e_n=e$ is even, then, on the right side of the expression for
$N_n(e)$, the terms corresponding to divisors $(d_1, \ldots, d_n)$ with an odd number of even
divisors cancel out exactly and only those with an \emph{even} number of even divisors contribute.
This accounts for precisely half the $n$-tuples $(d_1, \ldots, d_n)$. Accordingly, we obtain the
following improvement of Lemma~\ref{NEst} and Theorem~\ref{basicthm} in this situation.
\begin{thm}\label{NEst1}
  Suppose $3\leq n\leq p$ and $e$ is an even divisor of $q-1$, where $q\equiv 3 \pmod 4 , (q \geq 7)$.
  Then
  \[
    N_n(e) \geq \theta(e)^n \Bigl(q- \frac{n-1}{2}\,W(e)^n\sqrt{q}\Bigr).
  \]
  Moreover, if
  \[
    q \geq \frac{(n-1)^2}{4}W(q-1)^{2n}= (n-1)^2 2^{2(n\omega(q-1)-1)},
  \]
  then there exists a set of $n$ consecutive primitive elements in $\mathbb{F}_{q}$.
\end{thm}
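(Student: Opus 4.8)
The plan is to mirror the derivation of Lemma~\ref{NEst} and Theorem~\ref{basicthm}, but exploit the cancellation phenomenon just established for $q\equiv 3\pmod 4$. The starting point is the expression for $N_n(e)$ furnished by Lemma~\ref{Nexpr} with $e_1=\cdots=e_n=e$, namely
\[
  N_n(e)=\theta(e)^n\int_{d_1|e}\cdots\int_{d_n|e}S(\chi_{d_1},\ldots,\chi_{d_n}).
\]
As noted in the M\"obius remark, only square-free $n$-tuples $(d_1,\ldots,d_n)$ contribute, and there are $W(e)^n=2^{n\omega(e)}$ of these. First I would isolate the principal term $d_1=\cdots=d_n=1$, which by Lemma~\ref{weil} equals $q-n$, and observe that this tuple has \emph{zero} even entries, hence survives the cancellation. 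The remaining tuples split into those with an odd number of even $d_i$ (which cancel identically, as shown in the preamble) and those with a positive even number of even $d_i$, which I must bound.

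The key combinatorial step is counting how many of the $W(e)^n$ tuples survive. Since $e$ is even, each coordinate $d_i$ ranges over square-free divisors of $e$ and is ``even'' or ``odd'' according to its parity; exactly half of the $W(e)$ square-free divisors are even. A standard parity argument — expanding $\tfrac12\bigl((1+1)^n+(1-1)^n\bigr)$ over the two parity classes per coordinate — shows that precisely half of all $W(e)^n$ tuples have an even number of even coordinates. Thus the surviving non-principal tuples number $\tfrac12 W(e)^n-1$. Applying the bound $|S|\le(n-1)\sqrt q$ from Lemma~\ref{weil} to each of these, I obtain
\[
  N_n(e)\ge\theta(e)^n\Bigl(q-n-(n-1)\bigl(\tfrac12 W(e)^n-1\bigr)\sqrt q\Bigr),
\]
and since $n\ge3$ and $q\ge7$ the crude estimate $n+(n-1)\sqrt q\le\tfrac{n-1}{2}W(e)^n\sqrt q\cdot(\text{correction})$ — more directly, absorbing the $+n$ and the $+(n-1)\sqrt q$ terms using $q\ge7$ and $n\ge3$ — yields the claimed inequality
\[
  N_n(e)\ge\theta(e)^n\Bigl(q-\tfrac{n-1}{2}W(e)^n\sqrt q\Bigr).
\]

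For the second assertion, I would set $e=q-1$, so that $e$-free means primitive and $W(q-1)=2^{\omega(q-1)}$. Positivity of $N_n(q-1)$ guarantees a value $g$ with $g,g+1,\ldots,g+n-1$ all primitive; the distinctness of these $n$ elements follows from $n\le p$, as already arranged. From the displayed lower bound, $N_n(q-1)>0$ whenever $q>\tfrac{n-1}{2}W(q-1)^n\sqrt q$, i.e.\ $\sqrt q>\tfrac{n-1}{2}W(q-1)^n$, which upon squaring is exactly the hypothesis $q\ge\tfrac{(n-1)^2}{4}W(q-1)^{2n}$. Rewriting $W(q-1)^{2n}=2^{2n\omega(q-1)}$ gives the stated form $(n-1)^2 2^{2(n\omega(q-1)-1)}$.

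I expect the only genuine subtlety to be the parity count — verifying that exactly half of the $n$-tuples carry an even number of even coordinates, and correctly deducting the single principal tuple from this half. Everything else is a direct transcription of the unconditional argument with the factor $(n-1)$ replaced by $(n-1)/2$; the absorption of the lower-order $-n$ term into the main error term is routine given $q\ge7$ and $n\ge3$.
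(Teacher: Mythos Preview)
Your proposal is correct and follows exactly the route the paper takes: the paper establishes the cancellation identity $S(\chi_{d_1},\ldots,\chi_{d_n})=(\chi_{d_1}\cdots\chi_{d_n})(-1)\,S(\chi_{d_n},\ldots,\chi_{d_1})$ just before stating the theorem, observes that terms with an odd number of even $d_i$ cancel so that only half the $W(e)^n$ tuples survive, and then simply declares the result as the obvious modification of Lemma~\ref{NEst} and Theorem~\ref{basicthm}. Your write-up supplies the parity count and the absorption of the $-n$ term explicitly, but these are precisely the details the paper leaves implicit.
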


\section{Sieving inequalities and estimates}

As before assume $3 \leq n \leq p$ and let $e$ be a divisor of $q-1$. Given positive integers
$m, j, k$ with $1 \leq j,k \leq n$ define $m_{jk}=m$ if $j=k$ and otherwise $m_{jk}=1$.
\begin{lem}\label{Nle}
  Suppose $3 \leq n\leq p$ and $e$ is a divisor of $q-1$. Let $l$ be a prime divisor of $q-1$ not
  dividing~$e$. Then, for each $j \in \{1, \ldots, n\}$,
  \[
    \bigl|N(l_{j_1}e, \ldots l_{jn}e) -\theta(l)N_n(e)\bigr| \leq
    (1-1/l)\,\theta(e)^{n}(n-1)W(e)^n \sqrt{q}.
  \]
\end{lem}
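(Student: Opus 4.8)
The plan is to express both $N(l_{j1}e,\ldots,l_{jn}e)$ and $\theta(l)N_n(e)$ using the integral notation of~(\ref{ale}) and then subtract, so that the shared contributions cancel and only the genuinely new terms survive. First I would write out $N(l_{j1}e,\ldots,l_{jn}e)$ via Lemma~\ref{Nexpr}. Here the $k$-th slot has modulus $l_{jk}e$, which equals $le$ when $k=j$ and equals $e$ otherwise; since $l\nmid e$ and $l$ is prime, the square-free divisors of $le$ are exactly the divisors $d$ of $e$ together with $ld$ for such $d$. Thus in the $j$-th integral the index $d_j$ ranges over divisors of $e$ and over $l$ times divisors of $e$, whereas in every other slot $d_k$ ranges only over divisors of $e$. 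I would split the $j$-th sum accordingly into the part with $l\nmid d_j$ and the part with $l\mid d_j$, being careful to track the outside factor $\theta(l_{j1}e)\cdots\theta(l_{jn}e)=\theta(l)\theta(e)^n$ coming from the product of the $\theta$'s (using $\theta(le)=\theta(l)\theta(e)$).

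The key observation is that the sub-sum of $N(l_{j1}e,\ldots,l_{jn}e)$ coming from $l\nmid d_j$ is, after accounting for the factor $\theta(l)$, \emph{precisely} $\theta(l)N_n(e)$: the character sums $S(\chi_{d_1},\ldots,\chi_{d_n})$ are identical in the two expressions because the characters are the same, and the Möbius/$\phi$ weights match divisor-by-divisor. Hence
\begin{equation*}
  N(l_{j1}e,\ldots,l_{jn}e)-\theta(l)N_n(e)=\theta(l)\theta(e)^n
    \int_{d_1|e}\cdots\!\!\int_{\substack{d_j|e\\ l\,|\,ld_j}}\!\!\cdots\int_{d_n|e} S(\chi_{d_1},\ldots,\chi_{ld_j},\ldots,\chi_{d_n}),
\end{equation*}
that is, the difference is exactly the contribution of the terms in which the $j$-th modulus is divisible by $l$. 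The weight attached to such a term carries an extra factor $\mu(l)/\phi(l)=-1/(l-1)$ from~(\ref{ale}), and the sum over the $\phi(l)=l-1$ characters of exact order $l$ (combined with the divisors of $e$ in the remaining slots) produces the relevant multiplicity.

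Next I would bound this difference in absolute value by the triangle inequality. Every surviving term has $d_j$ divisible by $l>1$, so not all $d_i$ equal $1$; Lemma~\ref{weil} therefore applies and gives $|S(\chi_{d_1},\ldots,\chi_{d_n})|\le(n-1)\sqrt q$ for each such tuple. It remains to count the terms and sum the weights: the magnitude of the weight contributed by the $j$-th slot is $\sum_{d_j'|e}|\mu(ld_j')|/\phi(ld_j')\cdot\phi(l)$, and the factor $\phi(l)=l-1$ from the number of order-$l$ characters cancels against the $1/\phi(l)$ in the weight, leaving the clean factor $1/(l-1)$ times $\theta(l)=(l-1)/l$, i.e.\ $(1-1/l)$. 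The remaining $n-1$ slots each contribute $\sum_{d|e}|\mu(d)|=W(e)$, but one must be careful that only square-free $d$ survive and that the absolute values of the weights $1/\phi(d)$ recombine with the character counts to give exactly $W(e)$ per slot after the $\theta(e)^n$ has been extracted. Collecting everything yields the bound $(1-1/l)\,\theta(e)^n(n-1)W(e)^n\sqrt q$, as claimed.

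The main obstacle will be the bookkeeping in this last counting step: one must verify that the normalised weights $\mu(d)/\phi(d)$ together with the $\phi(d)$ characters of exact order $d$ combine, after taking absolute values, to give precisely the factor $W(e)^n$ (one $W(e)$ for each of the $n-1$ non-special slots and the correct single factor for the special $j$-th slot), and that the stray constants from $\theta(l)$ and $1/(l-1)$ assemble into exactly $(1-1/l)$ rather than $\theta(l)$ or $1/l$. The independence of the choice of $j\in\{1,\ldots,n\}$ is immediate from the symmetry of the construction, since relabelling which slot carries the extra factor $l$ does not change the magnitude of any bound.
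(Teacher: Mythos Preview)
Your approach is exactly the paper's: express both quantities via Lemma~\ref{Nexpr}, note that the prefactor is $\theta(le)\theta(e)^{n-1}=(1-1/l)\,\theta(e)^n$, observe that the sub-sum with $l\nmid d_j$ reproduces $\theta(l)N_n(e)$ identically, and bound the surviving terms (those with $l\mid d_j$) by Lemma~\ref{weil}. The paper compresses the final count into one line: the $j$-th slot contributes $W(le)-W(e)=W(e)$ new square-free divisors, the remaining $n-1$ slots contribute $W(e)^{n-1}$, and each character sum is at most $(n-1)\sqrt q$.

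One correction to your bookkeeping paragraph: the sentence ``leaving the clean factor $1/(l-1)$ times $\theta(l)=(l-1)/l$, i.e.\ $(1-1/l)$'' is garbled on two counts. First, if $\phi(l)$ cancels $1/\phi(l)$ you are left with $1$, not $1/(l-1)$; second, $1/(l-1)\cdot(l-1)/l=1/l$, not $1-1/l$. The clean way to see it is that for each square-free $d'\mid e$ the weight $|\mu(ld')|/\phi(ld')$ times the number $\phi(ld')$ of characters of exact order $ld'$ gives exactly $1$, so the $j$-th slot contributes $W(e)$ just like the others; the factor $(1-1/l)$ comes \emph{solely} from the prefactor $\theta(le)\theta(e)^{n-1}$ that you already extracted. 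With that fix your argument goes through and matches the paper's.
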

\begin{proof}
  Observe that $\theta(le)= (1-1/l)\,\theta(e)$ and that, by Lemma~\ref{Nexpr}, all the character
  sums in $\theta(l)N_n(e)$ appear identically in $N(l_{j_1}e, \ldots l_{jn}e)$. Hence, by
  Lemma~\ref{weil},
  \[
    \bigl|N(l_{j_1}e, \ldots l_{jn}e) -\theta(l)N_n(e)\bigr| \leq
    (1-1/l)\,\theta(e)^{n}(n-1)W(e)^{n-1}\,\bigl(W(le)-W(e)\bigr) \sqrt{q}
  \]
  and the result follows since $W(le)=2W(e)$.
\end{proof}

This leads to the main sieving result. Let $e$ be a divisor of $q-1$. In what follows, if
$\mathrm{Rad}(e)=\mathrm{Rad}(q-1)$ set $s=0$ and $\delta=1$. Otherwise, let $p_1, \ldots, p_s$,
$s \geq 1$, be the primes dividing $q-1$ but not~$e$, and set $\delta=1-n\sum_{i=1}^s p_i^{-1}$.
It is \emph{essential} to choose $e$ so that $\delta >0$.
\begin{lem}\label{sieve}
  Suppose $3 \leq n\leq p$ and $e$ is a divisor of $q-1$. Then, with the above notation,
  \begin{equation}\label{sieveeq1}
    N_n(q-1) \geq
      \Bigl( \sum_{j=1}^n\sum_{i=1}^sN(p_{ij1}e, \ldots, p_{ijn}e) \Bigr) - (ns-1)N_n(e),
  \end{equation}
  where $p_{ijk}$ means $(p_i)_{jk}$. Hence
  \begin{equation}\label{sieveeq2}
    N_n(q-1) \geq  \delta N_n(e) + \sum_{i=1}^s \Bigl( \sum_{j=1}^n
      N(p_{ij1}e, \ldots, p_{ijn}e) -\theta(p_i)N_n(e) \Bigr).
  \end{equation}
\end{lem}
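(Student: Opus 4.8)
The plan is to prove (\ref{sieveeq1}) by a Bonferroni-type sieve over the $ns$ auxiliary conditions imposed by the primes $p_1,\ldots,p_s$, and then to obtain (\ref{sieveeq2}) by a purely algebraic rearrangement that exploits the precise value $\delta = 1 - n\sum_{i=1}^s p_i^{-1}$. When $s=0$ we have $\mathrm{Rad}(e)=\mathrm{Rad}(q-1)$, so $e$-freeness coincides with $(q-1)$-freeness at every position; both asserted sums are empty, $\delta = 1$, and (\ref{sieveeq1}) and (\ref{sieveeq2}) both reduce to the identity $N_n(q-1) = N_n(e)$. So assume $s \geq 1$.

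First I would record the elementary fact that, since $\mathrm{Rad}(q-1) = \mathrm{Rad}(e)\,p_1\cdots p_s$ with the $p_i$ coprime to $e$, an element is $(q-1)$-free precisely when it is simultaneously $e$-free and $p_i$-free for each $i = 1,\ldots,s$. Accordingly, let $U$ be the set of (non-zero) $g \in \mathbb{F}_q$ for which $g+k-1$ is $e$-free for every $k$, so that $|U| = N_n(e)$. For each position $j \in \{1,\ldots,n\}$ and each prime $p_i$, let $B_{j,i} = \{ g \in U : g+j-1 \text{ is not } p_i\text{-free}\}$. Then $N_n(q-1) = |U| - \bigl| \bigcup_{j,i} B_{j,i} \bigr|$, and the union bound gives $N_n(q-1) \geq |U| - \sum_{j,i} |B_{j,i}|$. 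The key identification is that imposing, on top of membership in $U$, the extra condition that $g+j-1$ be $p_i$-free at the single position $j$ is exactly the requirement that $g+j-1$ be $p_i e$-free there (as $p_i \nmid e$), while the remaining positions stay $e$-free; hence the number of such $g$ equals $N(p_{ij1}e, \ldots, p_{ijn}e)$ and $|B_{j,i}| = N_n(e) - N(p_{ij1}e, \ldots, p_{ijn}e)$. Substituting this into the union bound and counting the $ns$ pairs $(j,i)$ yields (\ref{sieveeq1}) directly.

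With (\ref{sieveeq1}) established, (\ref{sieveeq2}) follows by bookkeeping. Writing $T = \sum_{i=1}^s\sum_{j=1}^n N(p_{ij1}e, \ldots, p_{ijn}e)$, I would group the right-hand side of (\ref{sieveeq2}) as $\delta N_n(e) + T - n\sum_{i=1}^s \theta(p_i) N_n(e)$, the factor $n$ arising from the inner sum over $j$, one subtracted copy of $\theta(p_i)N_n(e)$ being matched to each term $N(p_{ij1}e,\ldots,p_{ijn}e)$ in preparation for the later application of Lemma~\ref{Nle}. Using $\theta(p_i) = 1 - p_i^{-1}$ and the definition of $\delta$, the coefficient of $N_n(e)$ collapses:
\[
  \delta - n\sum_{i=1}^s \theta(p_i) = \Bigl(1 - n\sum_{i=1}^s p_i^{-1}\Bigr) - n\Bigl(s - \sum_{i=1}^s p_i^{-1}\Bigr) = 1 - ns = -(ns-1),
\]
so the right-hand side of (\ref{sieveeq2}) is identically equal to $T - (ns-1)N_n(e)$, the right-hand side of (\ref{sieveeq1}). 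Thus (\ref{sieveeq2}) is simply (\ref{sieveeq1}) rewritten in a form adapted to Lemma~\ref{Nle}.

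The main obstacle is conceptual rather than computational: it lies in setting up the sieve correctly, namely in identifying the count of $g \in U$ with position $j$ additionally $p_i$-free as $N(p_{ij1}e, \ldots, p_{ijn}e)$, and in checking that the union bound over the $ns$ single-position bad events is legitimate (it is, being the elementary Bonferroni inequality $\bigl|\bigcup_{j,i} B_{j,i}\bigr| \leq \sum_{j,i} |B_{j,i}|$). Once the coefficient identity for $\delta$ is verified, the passage from (\ref{sieveeq1}) to (\ref{sieveeq2}) is routine. An alternative, equivalent route would derive both inequalities directly from the character-sum expansion of Lemma~\ref{Nexpr}, discarding the non-principal contributions by the triangle inequality via Lemma~\ref{weil}; the combinatorial formulation above is cleaner and makes transparent why one insists on the condition $\delta > 0$ when these bounds are subsequently fed into the estimates of Lemma~\ref{Nle}.
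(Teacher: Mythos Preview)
Your proof is correct and follows essentially the same approach as the paper: both establish (\ref{sieveeq1}) by a counting argument over the set of $g$ with every $g+k-1$ $e$-free (the paper phrases the union bound as ``clocks up $1$ if primitive, otherwise a non-positive quantity'', which is exactly your Bonferroni inequality), and both obtain (\ref{sieveeq2}) by the purely algebraic rearrangement using $\theta(p_i)=1-1/p_i$ and the definition of $\delta$. Your write-up is slightly more explicit in naming the bad sets $B_{j,i}$ and verifying the coefficient identity, but the content is the same.
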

\begin{proof}
  It suffices to suppose $s \geq 1$. The various $N$ terms on the right side of~(\ref{sieveeq1})
  can be regarded as counting functions on the set of $g \in \mathbb{F}_{q}$ for which $g+j-1$ is
  $e$-free for each $j=1, \ldots, n$. In particular, $N_n(e)$ counts all such elements, whereas,
  for each $i=1, \ldots, s$, and $1 \leq j \leq n$, $N(p_{ij1}e,\ldots, p_{ijn}e)$ counts only
  those for which additionally $g+j-1$ is $p_i$-free. Note that $N_n(q-1)$ is the number of
  elements $g$ such that, not only are $g+j-1$ $e$-free for each $j=1, \ldots, n$ but additionally
  $g+j-1$ is $p_i$-free for each $1\leq i\leq s,\ 1 \leq j \leq n $. Hence we see that, for a given
  $g \in \mathbb{F}_{q}$, the right side of~(\ref{sieveeq1}) clocks up $1$ if $g+k-1$ is primitive
  for every $1 \leq k\leq n$, and otherwise contributes a non-positive (integral) quantity. This
  establishes~(\ref{sieveeq1}). Since $\theta(p_i)=1-1/p_i$, the bound~(\ref{sieveeq2}) is deduced
  simply by rearranging the right side of~(\ref{sieveeq1}).
\end{proof}
We are now able to provide a condition that, if satisfied, is sufficient to prove the existence of
$n$ consecutive primitive elements.
\begin{thm}\label{main}
  Suppose $3 \leq n\leq p$ and $e$ is a divisor of $q-1$. If $\mathrm{Rad}(e)=\mathrm{Rad}(q-1)$
  set $s=0$ and $\delta=1$. Otherwise, let $p_1, \ldots, p_s$, $s \geq 1$, be the primes dividing
  $q-1$ but not $e$ and set $\delta=1-n\sum_{i=1}^s p_i^{-1}$. Assume $\delta>0$. If also
  \begin{equation}
    q > \left((n-1)\left(\frac{ns-1}{\delta}+2\right) W(e)^n\right)^2,
    \label{e:main}
  \end{equation}
  then there exist $n$ consecutive primitive elements in $\mathbb{F}_{q}$.
\end{thm}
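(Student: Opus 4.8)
The plan is to combine the three preceding lemmas: feed the sieve bound \eqref{sieveeq2} of Lemma~\ref{sieve} the lower estimate of Lemma~\ref{NEst} and the deviation estimate of Lemma~\ref{Nle}, and then verify that the resulting lower bound for $N_n(q-1)$ is positive precisely under hypothesis \eqref{e:main}. Since $N_n(q-1)$ counts those $g$ for which $g,g+1,\ldots,g+n-1$ are all $(q-1)$-free, i.e.\ primitive, it suffices to show $N_n(q-1)>0$. I would first dispose of the case $s=0$: here $\mathrm{Rad}(e)=\mathrm{Rad}(q-1)$, so $W(e)=W(q-1)$, $N_n(q-1)=N_n(e)$, and $\tfrac{ns-1}{\delta}+2=1$, whence \eqref{e:main} reduces to $q>(n-1)^2W(e)^{2n}$; Lemma~\ref{NEst} then gives $N_n(e)>0$ directly (recovering Theorem~\ref{basicthm}).

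For $s\geq 1$ I start from \eqref{sieveeq2}. For each prime $p_i$ the bracket contains the $n$ quantities $N(p_{ij1}e,\ldots,p_{ijn}e)$, one for each shift $j=1,\ldots,n$, and Lemma~\ref{Nle} bounds the deviation of each from $\theta(p_i)N_n(e)$ by $\theta(p_i)\theta(e)^n(n-1)W(e)^n\sqrt{q}$ (using $1-1/p_i=\theta(p_i)$). Summing these $ns$ deviation bounds and applying Lemma~\ref{NEst} to the leading term $\delta N_n(e)$ (legitimate since $\delta>0$), I obtain
\[
  N_n(q-1) \geq \theta(e)^n\Bigl[\delta q - (n-1)W(e)^n\sqrt{q}\,\bigl(\delta + n\textstyle\sum_{i=1}^s\theta(p_i)\bigr)\Bigr].
\]

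The decisive step is the collapse of the bracketed factor. Because $\theta(p_i)=1-1/p_i$ and $\delta=1-n\sum_{i=1}^s p_i^{-1}$, one has $n\sum_{i=1}^s p_i^{-1}=1-\delta$ and hence the identity $n\sum_{i=1}^s\theta(p_i)=ns-1+\delta$, so that $\delta + n\sum_i\theta(p_i)=ns-1+2\delta$. Dividing through by $\delta>0$, positivity of the right-hand side above is equivalent to $\sqrt{q}>(n-1)W(e)^n\bigl(\tfrac{ns-1}{\delta}+2\bigr)$, which is exactly \eqref{e:main}; therefore $N_n(q-1)>0$ and the theorem follows.

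I expect the only genuine obstacle to be the bookkeeping: keeping track of the fact that each prime $p_i$ contributes $n$ deviation terms (one per shift), so the total deviation carries the factor $n\sum_i\theta(p_i)$ rather than $\sum_i\theta(p_i)$, and then spotting the cancellation $n\sum_i\theta(p_i)=ns-1+\delta$ that turns the messy factor $\delta+n\sum_i\theta(p_i)$, once divided by $\delta$, into the clean constant $\tfrac{ns-1}{\delta}+2$. Beyond this, the argument is a direct substitution of Lemmas~\ref{NEst} and~\ref{Nle} into \eqref{sieveeq2}.
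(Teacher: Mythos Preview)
Your proof is correct and follows essentially the same route as the paper: substitute Lemma~\ref{NEst} and Lemma~\ref{Nle} into \eqref{sieveeq2}, then use the identity $n\sum_{i}\theta(p_i)=ns-1+\delta$ to collapse the coefficient of $(n-1)W(e)^n\sqrt{q}$ to $\delta\bigl(\tfrac{ns-1}{\delta}+2\bigr)$. Your explicit treatment of the $s=0$ case (where \eqref{e:main} reduces to Theorem~\ref{basicthm}) is a welcome clarification that the paper leaves implicit.
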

\begin{proof}
  Assume $\delta>0$. From~(\ref{sieveeq2}) and Lemmas~\ref{NEst} and \ref{Nle} (noting that for
  each $j=1,\ldots, n$ the contribution to (\ref{sieveeq2}) is the same),
  \begin{eqnarray*}
    N_n(q-1) & \geq &
      \theta(e)^n\left(\delta(q-(n-1)W(e)^n\sqrt{q})-
       n\smash{\sum_{i=1}^s} \left(1-\frac{1}{p_i}\right)(n-1)W(e)^n\sqrt{q}\right) \\
    & = & \delta\,\theta(e)^n\sqrt{q}
      \left(\sqrt{q}-(n-1)W(e)^n-(n-1) \left(\frac{ns-1}{\delta}+1\right)W(e)^n\right).
  \end{eqnarray*}
  The conclusion follows.
\end{proof}

We conclude this section with the slight improvement when $q \equiv 3 \pmod 4$. Now, when $e$ is
even, the character sum expressions for the terms
$\sum_{j=1}^n N(p_{ij1}e, \ldots, p_{ijn}e) -\theta(p_i)N_n(e)$ in~(\ref{sieveeq2}) cancel
unless an \emph{even} number of divisors $d_1, \ldots, d_n$ are even. This reduces the bound in
Lemma~\ref{Nle} by a factor of two, and gives the following improvement to Theorem~\ref{main}.

\begin{thm}\label{main1}
  Suppose  $q \equiv 3 \pmod 4 , \ (q \geq 7)$, $3 \leq n\leq p$ and $e$ is an even divisor of $q-1$.
  If $\mathrm{Rad}(e)=\mathrm{Rad}(q-1)$ set $s=0$ and $\delta=1$. Otherwise, let
  $p_1, \ldots, p_s$, $s \geq 1$, be the primes dividing $q-1$ but not $e$ and
  set $\delta=1-n\sum_{i=1}^s p_i^{-1}$. Assume $\delta>0$. If also
  \[
    q > \left(\frac{n-1}{2}\left(\frac{ns-1}{\delta}+2\right) W(e)^n\right)^2,
  \]
  then there exist $n$ consecutive primitive elements in $\mathbb{F}_{q}$.
\end{thm}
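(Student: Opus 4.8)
The plan is to run the proof of Theorem~\ref{main} essentially verbatim, substituting at each stage the sharper estimates available when $q \equiv 3 \pmod 4$ and $e$ is even. Two ingredients change. First, in place of Lemma~\ref{NEst} I would use Theorem~\ref{NEst1}, which replaces the coefficient $(n-1)$ by $(n-1)/2$ in the lower bound for $N_n(e)$, and hence in the $\delta N_n(e)$ term of~(\ref{sieveeq2}). Second, I would show that each sieving bracket in~(\ref{sieveeq2}), namely $\sum_{j=1}^n N(p_{ij1}e, \ldots, p_{ijn}e) - \theta(p_i)N_n(e)$, is bounded in absolute value by half of the quantity obtained from summing the corresponding instances of Lemma~\ref{Nle}; that is, the coefficient $(n-1)$ coming from Lemma~\ref{Nle} is again replaced by $(n-1)/2$.

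The justification of this halving is the crux, and it reuses the reversal device introduced just before Theorem~\ref{NEst1}. Replacing $g$ by $-g-(n-1)$ sends the character sum $S(\chi_{d_1}, \ldots, \chi_{d_n})$ to $(\chi_{d_1}\cdots\chi_{d_n})(-1)\,S(\chi_{d_n}, \ldots, \chi_{d_1})$, and since $q \equiv 3 \pmod 4$ makes $-1$ a non-square, any tuple $(d_1, \ldots, d_n)$ containing an odd number of even entries contributes a character sum equal to minus that of its reversal $(d_n, \ldots, d_1)$. I would check that both $N_n(e)$ and the symmetrised sum $\sum_{j=1}^n N(p_{ij1}e, \ldots, p_{ijn}e)$ are invariant under the index reversal $k \mapsto n+1-k$: the former because all the $e_k$ equal $e$, the latter because summing over the distinguished position $j$ restores the symmetry that a single term lacks (the $j$-th term pairs with the $(n+1-j)$-th). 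Consequently the odd-parity tuples cancel in pairs everywhere they occur in~(\ref{sieveeq2}), leaving only the even-parity tuples, which constitute exactly half of the total; this is what produces the factor~$1/2$.

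With these two substitutions in hand, I would carry out the same manipulation as in the proof of Theorem~\ref{main}: feed Theorem~\ref{NEst1} into the $\delta N_n(e)$ term and the halved Lemma~\ref{Nle} into each of the $s$ sieving brackets, and use the identity $n\sum_{i=1}^s\theta(p_i) = \delta + (ns-1)$ to collapse the total coefficient of $W(e)^n$ into $\bigl(2 + (ns-1)/\delta\bigr)$. This yields
\[
  N_n(q-1) \geq \delta\,\theta(e)^n\sqrt{q}\left(\sqrt{q} - \frac{n-1}{2}\Bigl(\frac{ns-1}{\delta}+2\Bigr)W(e)^n\right),
\]
whose right-hand side is positive precisely under the stated hypothesis on $q$. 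Hence $N_n(q-1)>0$, giving the required $n$ consecutive primitive elements.

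The step I expect to be most delicate is the pairing argument of the second paragraph: making the cancellation rigorous requires confirming that the reversal $k \mapsto n+1-k$ acts compatibly on the M\"obius-and-character weights $\prod_k \mu(d_k)/\phi(d_k)$ carried by each tuple in the integral notation~(\ref{ale}), so that a tuple of odd parity and its reversal enter with equal coefficients and genuinely annihilate rather than merely bound one another. Once that symmetry is in place, the remaining arithmetic is routine, being identical to that already performed for Theorem~\ref{main}.
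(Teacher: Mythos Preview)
Your proposal is correct and follows essentially the same approach as the paper: the paper's entire argument for Theorem~\ref{main1} is the one-sentence remark preceding its statement, observing that in the sieving brackets $\sum_{j=1}^n N(p_{ij1}e,\ldots,p_{ijn}e)-\theta(p_i)N_n(e)$ the odd-parity character sums cancel, so the bound of Lemma~\ref{Nle} halves and the proof of Theorem~\ref{main} goes through with $(n-1)/2$ in place of $(n-1)$. Your proposal spells out exactly this, and in fact is more careful than the paper in explaining \emph{why} the cancellation survives in the sieving bracket (namely, that summing over $j$ restores reversal symmetry), a point the paper leaves implicit.
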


\begin{rem}
  In Theorems~$\ref{main}$ and $\ref{main1}$, for a given $s$ the best (largest) value of $\delta$
  is obtained when the largest $s$ prime factors of $q-1$ are used to compute $\delta$.
\end{rem}

\section{\boldmath
  Application of Theorems~\ref{basicthm} and~\ref{main} for generic~$n$;
  proof of Theorem~\ref{rum}}\label{brandy}

As an application of Theorem~\ref{main}, consider the case $n=3$. We showed after
Theorem~\ref{basicthm} that $\mathbb{F}_{q}$ contains three consecutive primitive elements for all
$q$ satisfying $\omega(q-1) \geq 50$. For $14\leq \omega(q-1) \leq 49$ we verify easily
that~(\ref{e:main}) holds with $s=8$. As an example, consider $\omega(q-1)=14$ and $s=8$, whence
$\delta\geq 1-3\bigl(\frac{1}{17}+\frac{1}{19}+\frac{1}{23}+\frac{1}{29}+\frac{1}{31}+\frac{1}{37}+
\frac{1}{41}+\frac{1}{43}\bigr)> 0.1109$. It follows that the right hand side of~(\ref{e:main})
is slightly larger than $12\,03974\,78114\,70119$, which is smaller than $P_{14}$. It follows that
$\mathbb{F}_q$ has three consecutive primitive elements when $\omega(q-1)=14$.

We are unable to proceed directly when $\omega(q-1) \leq 13$. For example, when $\omega(q-1) = 13$
there is no value of $s$ with $1\leq s \leq 13$ which resolves~(\ref{e:main}). If we choose $s=8$
we minimise the right-side of~(\ref{e:main}) whence it follows that we need only consider
$\omega(q-1) \leq 13$ and $q\leq 3.49\times 10^{15}$.

We continue this procedure for larger values of~$n$. We use Theorem~\ref{basicthm} to obtain an
initial bound on $\omega(q-1)$, then use Theorem~\ref{main}, with suitable values of~$s$, to
reduce this bound as far as possible. We therefore reduce the problem of finding $n$ consecutive
primitive elements in $\mathbb{F}_{q}$ to the finite computation in which we need only check those
$q$ in a certain range: this range is given in Table \ref{Surletable}.

We now use Theorem~\ref{basicthm} to obtain a bound on $q_{0}(n)$ for a generic value of~$n$. To
bound $\omega(q-1)$ we use Robin's result~\cite[Thm.\ 11]{Robin}, that
$\omega(n) \leq 1.38402\log n/(\log\log n)$ for all $n\geq 3$. Since the function
$\log x /(\log\log x)$ is increasing for $x\geq e^{e}$ we have
\begin{equation}\label{stout}
  \omega(q-1) \leq \frac{1.38402\log q}{\log\log q},
\end{equation}
for all $q\geq 17$. It is easy to check that~(\ref{stout}) holds also for all $3\leq q \leq 17$.
We use~(\ref{stout}) to rearrange the condition in~(\ref{porter}), showing that
\begin{equation}\label{bitter}
  \log q \left\{ 1 - \frac{2.76804 n \log 2}{\log\log q}\right\} \geq 2 \log (n-1).
\end{equation}
We solve~(\ref{bitter}) by first insisting that the term in braces be bounded below by $d$, where
$d\in(0, 1)$, and then insisting that $d \log q \geq 2\log(n-1)$. This shows that~(\ref{porter})
is certainly true provided that
\begin{equation}\label{lager}
  q \geq \max\{ (n-1)^{2/d}, \exp(2^{2.76804 n/(1-d)})\}.
\end{equation}
We choose $d = 0.0001$, so that we require $q \geq \exp(2^{2.77 n})$ for all $n\geq 6$. This
proves Theorem~\ref{rum}.

We remark that were one to use \cite[Thm.\ 12]{Robin} one could replace the bound in (\ref{stout}) by $ \log q/\log\log q + 1.458 \log q /(\log\log q)^{2}$. This would show, when $n$ is sufficiently large, that the exponent in Theorem~\ref{rum} could be reduced from $2.77$ to $2+ \epsilon$ for any positive $\epsilon$: we have not pursued this.

\section {Three consecutive primitive elements}\label{algo}

To prove Theorem~\ref{water} we verified numerically the existence of three
consecutive primitive elements for all values of $q$ that remained after the
application of Theorem~\ref{main}. As explained in Section~\ref{brandy}, for $n=3$ it is only
necessary to consider the cases where $\omega(q-1)\leq 13$. For each possible value of
$\omega(q-1)$ Theorem~\ref{main} was used to compute a bound on the values of $q$ below which the existence of three consecutive primitive elements was not ensured; these upper bounds
are presented in the second column of Table~\ref{wine}. Algorithm~\ref{beer} was then used to
generate the values of $q$ that required testing.

\begin{algorithm}[hbpt]
  \DontPrintSemicolon
  \caption{Enumeration of all odd integers $m+1$ that satisfy the conditions $m<M$ and
    $\omega(m)=w$.\label{beer}}
  Set $L$ to $\lfloor(M-1)/\prod_{i=1}^{w-1}p_i\rfloor$; here, $p_i$ is the $i$-th prime
    ($p_1=2$, $p_2=3$, and so on) \;
  Generate all tuples $(u_i,v_i)$ of the form $(p^k,p)$, with $p$ an odd prime, $k$ a positive
  integer, and $p^k\leq L$, and sort them in increasing order of the value of $u_i$, so that
  $(u_1,v_1)=(3,3)$, $(u_2,v_2)=(5,5)$, $(u_3,v_3)=(7,7)$, $(u_4,v_4)=(9,3)$, and so on \;
  Append the tuple $(\infty,0)$ to the list of tuples \;
  \For{$k=1,2,\ldots,\lfloor\log(M-1)/\log 2\rfloor$}
  {
    Set $m_1$ to $2^k$, $i_2$ to $0$, and $d$ to $2$ \;
    \eIf{$w=1$}
      {Test $m_1+1$}
      {
        \While{$d>1$}
        {
          \Repeat{$v_{i_d}\not\in\{ v_{i_2},\ldots,v_{i_{d-1}}\}$}{Increment $i_d$}
          \eIf{$m_{d-1}\,u_{i_d}^{w+1-d}\geq M$}
            {Decrement $d$}
            {
              Set $m_d$ to $m_{d-1}u_{i_d}$ \;
              \eIf{$d=w$}
                {Test $m_d+1$}
                {Increment $d$ and then set $i_d$ to $i_{d-1}$}
            }
        }
      }
  }
\end{algorithm}

Lines $10$--$12$ of Algorithm~\ref{beer} ensure that $\gcd(m_{d-1},u_{i_d})=1$ whenever line~$13$
is reached, and ensure that $\omega(m_d)=\omega(m_{d-1})+1$ (i.e., that $\omega(m_d)=d$) every
time line~$17$ is reached. Testing $m+1$, with either $m=m_1$ or $m=m_d$, amounts to verifying that
$m+1$ is an odd prime power. If so, we see whether Theorem~\ref{main} can deal with it; if not, we verify that there exist three consecutive primitive elements in the corresponding finite field.
It turned out to be faster to rule out values of $m+1$ using all possible values of $s$ in
Theorem~\ref{main} (treating $m+1$  as if it were a prime power) before testing if it were a
prime or a prime power.

Algorithm~\ref{beer} was coded using the PARI/GP calculator programming language~\cite{pari}
(version 2.7.2 using a GMP 6.0.0 kernel) and was run for $w=1,2,\ldots,13$ on one core of a
$3.3$~GHz Intel i3-2120 processor. Since $w=\omega(q-1)$ this covers all cases that must be
tested. It took about $7$ hours to confirm that the following odd values of $q$ are the only odd
ones for which the finite field $\mathbb{F}_q$ does not have three consecutive primitive elements:
$3$, $5$, $7$, $3^2$, $13$, $5^2$, $29$, $61$, $3^4$, $11^2$, and $13^2$. For each value of
$\omega(q-1)$ Table~\ref{wine} presents the value of $M$ that was used (second column), the number
of $m+1$ values that required testing (third column), the number of $m+1$ values that survived an
application of Theorem~\ref{main} (fourth column), the number of these that were actually primes
(fifth column, $1804641$ in total), and the number of these that were actually prime powers (sixth column, $1411$ in total).

\begin{table}[ht]
  \caption{Bounds and number of tests performed when $n=3$}
  \label{wine}
  \centering
  \begin{tabular}{rrrrrr}
    \hline\hline
    $\omega(q-1)$ & $q$ upper bound ($M$) & $m+1$ tests & $m+1$ survivors & $p$ tests & $q$ tests \\
    \hline
    $ 1$ & $                   256$ & $          7$ & $        7$ & $       3$ & $  1$ \\
    $ 2$ & $                 16384$ & $       2425$ & $      805$ & $     164$ & $  8$ \\
    $ 3$ & $              8\,02816$ & $   1\,72827$ & $    21350$ & $    4785$ & $ 26$ \\
    $ 4$ & $            317\,19424$ & $  54\,59954$ & $ 1\,49265$ & $   33357$ & $106$ \\
    $ 5$ & $           3682\,12715$ & $ 307\,38304$ & $ 6\,95172$ & $1\,59618$ & $236$ \\
    $ 6$ & $          97774\,32663$ & $2785\,78984$ & $16\,80653$ & $3\,80984$ & $405$ \\
    $ 7$ & $       4\,89130\,46416$ & $2621\,82675$ & $21\,31439$ & $4\,78146$ & $353$ \\
    $ 8$ & $      32\,73635\,05978$ & $2182\,09768$ & $21\,62062$ & $4\,76569$ & $203$ \\
    $ 9$ & $     624\,54297\,09655$ & $4790\,05331$ & $ 8\,97028$ & $1\,94276$ & $ 63$ \\
    $10$ & $    2205\,39992\,60750$ & $ 687\,95792$ & $ 2\,62534$ & $   55943$ & $  9$ \\
    $11$ & $   11712\,18570\,96884$ & $  92\,50747$ & $    93920$ & $   19315$ & $  1$ \\
    $12$ & $1\,30704\,25885\,23590$ & $  23\,78985$ & $     6566$ & $    1294$ & $  0$ \\
    $13$ & $3\,48913\,59578\,26319$ & $      11547$ & $      964$ & $     187$ & $  0$ \\
    \hline
  \end{tabular}
\end{table}

\section {Conjectures}\label{conj}

Based on numerical experiments up to $10^8$, the following conjectures appear to be plausible.

\begin{conj}\label{C1}
  The finite field $\mathbb{F}_q$ has $4$ consecutive primitive elements except when
  $q$ is divisible by $2$ or by $3$, or when $q$ is one of the following: $5$, $7$, $11$, $13$,
  $17$, $19$, $23$, $5^2$, $29$, $31$, $41$, $43$, $61$, $67$, $71$, $73$, $79$, $113$, $11^2$,
  $13^2$, $181$, $199$, $337$, $19^2$, $397$, $23^2$, $571$, $1093$, $1381$, $7^4=2401$.
\end{conj}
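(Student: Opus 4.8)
The plan is to follow exactly the strategy that establishes Theorem~\ref{water} for $n=3$, now specialised to $n=4$: reduce the infinitely many fields to a finite, explicit list via the sieving criterion, and then verify that list by direct computation. First I would record the structural constraint that is already built into the problem. Four consecutive elements $g, g+1, g+2, g+3$ can be distinct only when the characteristic $p$ satisfies $p \geq 4$, hence $p \geq 5$; this forces $\gcd(q,6)=1$ and accounts immediately for the blanket exclusion of every $q$ divisible by $2$ or by $3$ in the statement, with no computation required. The remaining task is to show that among the $q$ coprime to $6$ the only exceptions are the listed ones.

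Next, with $n=4$ fixed, I would apply Theorem~\ref{basicthm} (or its refinement Theorem~\ref{main1} in the case $q \equiv 3 \pmod 4$) to obtain the initial bound $\omega(q-1) \leq 23$ recorded in Table~\ref{Surletable}. I would then deploy Theorem~\ref{main}, choosing for each candidate $\omega(q-1)$ the optimal sieving parameter $s$ (using the largest $s$ prime factors of $q-1$, as in the Remark) to drive down the admissible values of $\omega(q-1)$ as far as the inequality~(\ref{e:main}) permits, and thereby bound every surviving $q$ by $q_0(4)=3.29\times 10^{32}$. This is the direct analogue of the $n=3$ reduction summarised in Table~\ref{wine}.

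Then I would run an exhaustive search modelled on Algorithm~\ref{beer}: for each admissible $w=\omega(q-1)$, enumerate the prime powers $q \leq q_0(4)$ whose predecessor $q-1$ has exactly $w$ distinct prime factors, eliminate as many as possible by testing~(\ref{e:main}) over all feasible $s$ (treating each $q$ as a prime power before factoring it), and for the stubborn residue test directly whether $\mathbb{F}_q$ actually contains four consecutive primitive elements. Were the only survivors precisely the listed values, the conjecture would be promoted to a theorem.

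The main obstacle --- and the reason this remains a conjecture --- is the chasm between the sieving bound $q_0(4)\approx 3.29\times 10^{32}$ and the range $q \leq 10^{8}$ that the numerical experiments actually cover. For $n=3$ the corresponding bound was $\approx 3.49\times 10^{15}$, just small enough for the seven-hour computation behind Theorem~\ref{water} to terminate; at $10^{32}$ the enumeration is hopelessly out of reach. Closing the gap would require either a substantial sharpening of the character-sum estimate of Lemma~\ref{weil} combined with a more aggressive sieve, so as to push $q_0(4)$ down by many orders of magnitude, or a genuinely new idea that bypasses exhaustive verification altogether. I do not expect the present machinery, on its own, to suffice.
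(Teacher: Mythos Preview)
Your assessment is accurate and matches the paper's own stance: Conjecture~\ref{C1} is stated without proof, and the paper's accompanying discussion gives exactly the reasoning you outline --- the $n=3$ strategy would in principle apply, but the intermediate values of $\omega(q-1)$ (the paper singles out $\omega(q-1)=12$, which already forces testing up to about $4\times10^{21}$) make the exhaustive verification infeasible, and the modest saving from Theorem~\ref{main1} does not close the gap. There is no proof in the paper to compare against; your proposal correctly identifies both the natural line of attack and the obstruction.
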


At present this conjecture is very difficult to settle by computation: there are simply too many
cases to test. Consider, for example, $\omega(q-1)=12$. Even though, according to Table~\ref{Surletable} we need
to go up only to $\omega(q-1)=23$, the hard cases are the intermediate values of $\omega(q-1)$. It is necessary to
test values of $q$ up to about $4\times 10^{21}$ that can have a prime power factor up to about
$2\times 10^{10}$. Given the large sizes of these numbers, the small savings gained from just
considering $q\equiv 3 \pmod 4$ and Theorem \ref{main1} will not be sufficient.
%
%
%

\begin{conj}\label{C2}
  The finite field $\mathbb{F}_q$ has $5$ consecutive primitive elements except when
  $q$ is divisible by $2$ or by $3$, or when $q$ is one of the following: $5$, $7$, $11$, $13$,
  $17$, $19$, $23$, $5^2$, $29$, $31$, $37$, $41$, $43$, $47$, $7^2$, $61$, $67$, $71$, $73$,
  $79$, $101$, $109$, $113$, $11^2$, $5^3$, $127$, $131$, $139$, $151$, $157$, $163$, $13^2$,
  $181$, $193$, $199$, $211$, $229$, $241$, $271$, $277$, $281$, $17^2$, $307$, $313$, $331$,
  $337$, $19^2$, $379$, $397$, $433$, $439$, $461$, $463$, $23^2$, $547$, $571$, $577$, $601$,
  $613$, $5^4$, $631$, $691$, $751$, $757$, $29^2$, $31^2$, $1009$, $1021$, $1033$, $1051$,
  $1093$, $1201$, $1297$, $1321$, $1381$, $1453$, $1471$, $1489$, $1531$, $1597$, $1621$, $1723$,
  $1741$, $1831$, $43^2$, $1861$, $1933$, $2017$, $2161$, $2221$, $2311$, $2341$, $7^4$, $3061$,
  $59^2$, $3541$, $3571$, $61^2$, $4201$, $4561$, $4789$, $4831$, $71^2$, $5281$, $5881$, $89^2$,
  $8821$, $9091$, $9241$, $113^2$, $5^6=15625$.
\end{conj}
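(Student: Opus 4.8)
The plan is to follow exactly the template used to establish Theorem~\ref{water} for $n=3$, now with $n=5$. Because the entire apparatus requires $3 \leq n \leq p$, we must assume the characteristic $p \geq 5$; this is precisely why the statement excludes every $q$ divisible by $2$ or by $3$, and why no genuine argument is needed in those cases. For the remaining $q$ (odd and coprime to $3$) I would first invoke Theorem~\ref{basicthm} with $n=5$, whose hypothesis reads $q \geq 16\cdot 2^{10\,\omega(q-1)}$. Comparing this against the primorial lower bound for $q-1$, exactly as in the discussion following Theorem~\ref{basicthm} where $P_m+1 \geq 2^{2+6m}$ settled the case $n=3$, yields an explicit threshold $m_0$ such that $\mathbb{F}_q$ certainly contains five consecutive primitive elements once $\omega(q-1) \geq m_0$.

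The second step is the sieve. For each intermediate value of $\omega(q-1)$ below $m_0$ I would apply Theorem~\ref{main}, choosing the number $s$ of sieving primes so as to minimise the right-hand side of~(\ref{e:main}) while keeping $\delta = 1 - 5\sum_{i=1}^s p_i^{-1} > 0$; where $q \equiv 3 \pmod 4$ the sharper Theorem~\ref{main1} saves the customary factor close to $2$. As recorded in Table~\ref{Surletable}, this reduces the problem to the finite range $\omega(q-1) \leq 37$ together with $q \leq 4.22 \times 10^{61}$, the latter bound arising from the worst surviving case $\omega(q-1) = 37$ treated with its optimal $s$.

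The third step is the finite verification. One would enumerate every odd prime power $q$ in that range with $\omega(q-1) \leq 37$ and not divisible by $3$, using a variant of Algorithm~\ref{beer} adapted to generate $m = q-1$ with the prescribed number of distinct prime factors, and for each surviving $q$ test directly whether some $g \in \mathbb{F}_q$ makes $g, g+1, \dots, g+4$ simultaneously primitive. Since the enumeration fixes the factorisation of $q-1$, each primitivity test is a handful of exponentiations, and prime-power $q = p^k$ are handled in the corresponding extension field. The exceptional list in the statement is exactly what this search is conjectured to output.

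The main obstacle is that this finite computation, although conceptually routine, is at present infeasible in its entirety. As noted after Conjecture~\ref{C1}, the difficulty lies neither in the extreme tail, where large $\omega(q-1)$ is already handled by the sieve, nor in the very small values of $\omega(q-1)$, which are quickly checked, but in the intermediate band --- roughly $\omega(q-1)$ in the teens and twenties --- where $q$ may be as large as $10^{21}$ or more while carrying prime-power factors up to about $10^{10}$. Both enumerating these $q$ and factoring $q-1$ on such a scale lie well beyond the roughly seven-hour search that disposed of $n=3$, and the modest gain from restricting to $q \equiv 3 \pmod 4$ via Theorem~\ref{main1} does not close the gap. Settling Conjecture~\ref{C2} unconditionally therefore awaits either a substantially sharper sieving inequality that lowers the bound of $37$ on $\omega(q-1)$, or a considerable increase in available computation.
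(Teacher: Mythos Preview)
The statement is a \emph{conjecture}; the paper offers no proof and does not claim one. It is presented solely on the strength of numerical experiments up to $10^8$. Your proposal is therefore not competing against any argument in the paper, and you have in fact reproduced exactly the strategy the paper uses for $n=3$ (Theorem~\ref{water}) and explicitly envisages for larger $n$: Theorem~\ref{basicthm} for an initial bound on $\omega(q-1)$, Theorem~\ref{main} (with Theorem~\ref{main1} when $q\equiv 3\pmod 4$) to sieve down to the values in Table~\ref{Surletable}, and then a finite enumeration via Algorithm~\ref{beer}. Your numbers --- the cutoff $\omega(q-1)\le 37$ and $q\le 4.22\times 10^{61}$ --- are precisely the $n=5$ row of that table.

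You also correctly identify the obstruction: the intermediate values of $\omega(q-1)$ leave a search space that is currently out of reach, and the factor-of-two saving from Theorem~\ref{main1} is negligible against it. The paper makes exactly this point (stated for Conjecture~\ref{C1} at $n=4$, where matters are already hopeless; for $n=5$ they are worse still). So there is no error in your plan --- it simply is not a proof, because the decisive computational step cannot presently be carried out, which is why the paper records the statement as a conjecture rather than a theorem.
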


We also examined fields which do not have $6$, $7$ and $8$ consecutive primitive elements. Since there
are many of these we do not list them as in Conjectures~\ref{C1} and~\ref{C2} but merely indicate
the last one we found.

\begin{conj}\label{C3}
  The finite field $\mathbb{F}_q$ has $6$ consecutive primitive elements when $q$ is not
  divisible by $2$, by $3$, or by $5$, and when $q>65521$.
\end{conj}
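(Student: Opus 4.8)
The plan is to follow verbatim the strategy that proves Theorem~\ref{water} for $n=3$, now with $n=6$. First I would observe that the hypothesis ``$q$ divisible by none of $2$, $3$, $5$'' is exactly the standing condition $n\leq p$ in the present setting: the characteristic $p$ must satisfy $p\geq 6$, and since $6$ is composite this forces $p\geq 7$, i.e.\ $\gcd(q,30)=1$. With $n=6$ fixed, Theorem~\ref{basicthm} (together with Theorem~\ref{main1} in the case $q\equiv 3\pmod 4$) furnishes the initial cut-off on the number of prime factors: as recorded in Table~\ref{Surletable}, inequality~(\ref{porter}) already guarantees six consecutive primitive elements whenever $\omega(q-1)\geq 59$.

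Next I would apply the sieve of Theorem~\ref{main} to drive this bound down. For each value of $\omega(q-1)$ below $59$ one searches for an integer $s$, and the corresponding divisor $e$ of $q-1$ obtained by deleting the $s$ largest primes (optimal by the Remark after Theorem~\ref{main1}), so that $\delta>0$ and~(\ref{e:main}) holds; each value of $\omega(q-1)$ for which such an $s$ exists is eliminated outright. The values of $\omega(q-1)$ that survive are the troublesome intermediate ones, and for each of these Theorem~\ref{main}, minimised over the admissible $s$, returns a finite upper bound $M$ on $q$, precisely the role played by the second column of Table~\ref{wine} when $n=3$. It then remains to enumerate, by a suitably adapted version of Algorithm~\ref{beer}, all prime powers $q<M$ with the prescribed $\omega(q-1)$ and with $p\geq 7$, and to verify for each that $\mathbb{F}_q$ contains six consecutive primitive elements, recording the genuine exceptions.

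The hard part---and the reason the assertion is posed as a conjecture rather than a theorem---is the sheer magnitude of this residual computation. The overall ceiling here is $q_0(6)=4.61\times 10^{113}$, dwarfing the $3.49\times 10^{15}$ that made the $n=3$ verification tractable, and the intermediate values of $\omega(q-1)$ leave bounds $M$ so large, with prime-power components so big, that exhaustively enumerating and testing the candidate $q$ lies far beyond what is currently feasible. This is the same obstruction already flagged for Conjecture~\ref{C1}, only considerably more severe, and the modest factor-of-two saving from Theorem~\ref{main1} for $q\equiv 3\pmod 4$ comes nowhere near bridging the gap between the numerical search (carried out to $10^8$) and the sieve-reduced bounds. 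Absent a markedly sharper sieving inequality, or a new idea that collapses the intermediate range of $\omega(q-1)$, the claim that $65521$ is the last exceptional $q$ must for now rest on the numerical evidence rather than on a completed exhaustive check.
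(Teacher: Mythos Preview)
Your assessment is correct and matches the paper's own stance: this statement is a \emph{conjecture}, not a theorem, and the paper offers no proof of it. You have accurately identified the strategy that would be needed (the same pipeline as for $n=3$: Theorem~\ref{basicthm} for an initial bound on $\omega(q-1)$, Theorem~\ref{main} to sieve it down, then exhaustive enumeration via Algorithm~\ref{beer}), and you have correctly diagnosed why it cannot currently be completed --- the residual search space, governed by the entry $q_0(6)=4.61\times 10^{113}$ in Table~\ref{Surletable}, is computationally out of reach, just as the paper already notes for the easier Conjecture~\ref{C1}.

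One small correction: the bound $\omega(q-1)\leq 59$ in Table~\ref{Surletable} is not the output of inequality~(\ref{porter}) alone; it is the bound obtained \emph{after} the sieve of Theorem~\ref{main} has been applied (see the description in Section~\ref{brandy}). The raw bound from Theorem~\ref{basicthm} would be considerably larger. This does not affect your overall conclusion.
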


\begin{conj}\label{C4}
  The finite field $\mathbb{F}_q$ has $7$ consecutive primitive elements when $q$ is not
  divisible by $2$, by $3$, or by $5$, and when $q>1037401$.
\end{conj}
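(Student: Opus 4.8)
The plan is to prove Conjecture~\ref{C4} by the same three-stage strategy that established Theorem~\ref{water}, reducing the assertion to a finite verification and then carrying out that verification computationally. First observe that the hypothesis that $q$ is coprime to $2$, $3$ and $5$ is precisely the characteristic condition $p \geq 7 = n$ required throughout, so the machinery of Theorems~\ref{basicthm} and~\ref{main} applies with $n = 7$. The claimed threshold $q > 1037401$ records that $1037401$ is the largest $q$ found in the numerical search (up to $10^{8}$) for which $\mathbb{F}_q$ lacks seven consecutive primitive elements; proving the conjecture means establishing that no further exceptions occur all the way up to the analytic cutoff.

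The analytic reduction is the routine part. By Theorem~\ref{rum} and the $n=7$ row of Table~\ref{Surletable}, it suffices to treat $q$ with $\omega(q-1) \leq 100$ and $q \leq 3.75 \times 10^{220}$. One then applies Theorem~\ref{main}, for each value of $\omega(q-1)$ in turn, choosing $s$ so as to minimise the right-hand side of~(\ref{e:main}); here the sieving parameter
\[
  \delta = 1 - 7\sum_{i=1}^{s} p_i^{-1}
\]
must remain positive, which for $n = 7$ forces the sieved primes $p_1, \ldots, p_s$ to be comparatively large and so limits how aggressively the bound on $\omega(q-1)$ can be driven down. As in the passage from $\omega(q-1) \leq 50$ to $\omega(q-1) \leq 13$ in the case $n = 3$, this step would leave a small number of surviving values of $\omega(q-1)$, each with an associated finite bound $M$ on $q$ below which the existence of seven consecutive primitive elements is not yet guaranteed. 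Where $q \equiv 3 \pmod 4$ one would substitute Theorems~\ref{NEst1} and~\ref{main1} to gain the extra factor of two.

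The finite verification is the main obstacle, and it is why the statement is offered only as a conjecture. Using Algorithm~\ref{beer} with $n = 7$, one would enumerate every odd prime power $q$ whose value of $\omega(q-1)$ survived the sieve and which lies below the corresponding $M$, discard those resolved by Theorem~\ref{main}, and for each remaining candidate directly test whether $\{g, g+1, \ldots, g+6\}$ is a block of primitive elements for some $g$. The difficulty is one of scale. The paper already notes that even the case $n = 4$ (Conjecture~\ref{C1}) is beyond reach because the intermediate values of $\omega(q-1)$ generate too many candidates, requiring tests of $q$ up to roughly $4 \times 10^{21}$ with prime-power factors as large as $2 \times 10^{10}$. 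For $n = 7$ the post-sieve bounds $M$ are vastly larger still, the number of prime powers to be tested grows accordingly, and the factor-of-two economy available when $q \equiv 3 \pmod 4$ is nowhere near enough to compensate. Barring a substantially sharper sieving inequality than Theorem~\ref{main}, or a new idea that removes the intermediate-$\omega$ bottleneck, the exhaustive computation that the reduction demands is not currently feasible.
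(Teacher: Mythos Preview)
The statement is a \emph{conjecture} in the paper, not a theorem: the authors offer no proof, only the remark that it ``appears to be plausible'' based on numerical experiments up to $10^{8}$. Your proposal correctly recognises this and does not pretend to supply a proof; instead you outline the same three-stage strategy (Theorem~\ref{basicthm}, then Theorem~\ref{main}, then exhaustive enumeration via Algorithm~\ref{beer}) that the paper uses for $n=3$, and you explain---in line with the paper's own discussion of Conjecture~\ref{C1}---why the final computational stage is presently infeasible for $n=7$. So your assessment matches the paper's: there is no proof here on either side, and your description of the obstruction is accurate.
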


\begin{conj}\label{C5}
  The finite field $\mathbb{F}_q$ has $8$ consecutive primitive elements when $q$ is not
  divisible by $2$, by $3$, by $5$ or by $7$, and when $q>4476781$.
\end{conj}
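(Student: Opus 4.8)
The plan is to follow the template that established Theorem~\ref{water} for $n=3$, now with $n=8$. The hypothesis that $q$ is divisible by none of $2,3,5,7$ is exactly the condition $p\geq 11$, and since $8\leq 11$ this places us in the admissible regime $3\leq n\leq p$ required throughout Sections~2--3; indeed the exclusion of characteristics $2,3,5,7$ is \emph{forced}, since a field of characteristic $p<8$ cannot contain $8$ distinct consecutive elements (for $p=7$ one has $g+7\equiv g$). First I would apply Theorem~\ref{basicthm} with $n=8$: writing $P_\omega$ for the product of the first $\omega$ primes, the smallest admissible $q$ with $\omega(q-1)=\omega$ is $P_\omega+1$, and $P_\omega+1\geq 49\cdot 2^{16\omega}$ holds once $\omega$ is large enough, because $\log P_\omega\sim\omega\log\omega$ eventually dominates $16\omega\log 2$; this yields a crude upper bound on $\omega(q-1)$. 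I would then drive the bound down by optimising the sieve of Theorem~\ref{main}: for each surviving $\omega$, choose the number $s$ of sieving primes so as to minimise the right-hand side of~(\ref{e:main}), taking the largest $s$ prime factors of $q-1$ (which is optimal). This should reduce the problem to the finitely many $q$ with $\omega(q-1)\leq 171$ and $q\leq 2.27\times 10^{425}$ recorded in Table~\ref{Surletable}.

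The remaining, finite task is computational. I would adapt Algorithm~\ref{beer} to run with the target $n=8$ and with $w=\omega(q-1)$ ranging over the surviving values, enumerating every odd prime power $q\leq 2.27\times 10^{425}$ of characteristic $p\geq 11$ with the prescribed number of prime factors of $q-1$. For each candidate I would first try to discharge it through Theorem~\ref{main}, and through the sharper Theorem~\ref{main1} when $q\equiv 3\pmod 4$, testing all admissible choices of $s$ against the full prime factorisation of $q-1$; only those $q$ surviving every such choice would be tested directly for $8$ consecutive primitive elements. The low end of the range is already handled: the numerical search to $10^8$ on which Conjecture~\ref{C5} rests confirms that $q=4476781$ is the last $q$ of characteristic $\geq 11$ lacking $8$ consecutive primitive elements, so what genuinely remains is to certify the interval $(10^8,\,2.27\times 10^{425}]$.

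The main obstacle is precisely that this interval is astronomically large and cannot be enumerated with present techniques. As observed after Conjecture~\ref{C1}, the computation is already infeasible for $n=4$, where one must test $q$ up to about $4\times 10^{21}$ with prime-power factors up to about $2\times 10^{10}$; for $n=8$ both the range ($\sim 10^{425}$) and the number of intermediate values of $\omega(q-1)$ (up to $171$) are far worse, and each surviving candidate requires primality/prime-power testing and a character-sum check on numbers of hundreds of digits. The factor-of-two saving from the $q\equiv 3\pmod 4$ refinement does not materially change this, exactly as noted for $n=4$. Consequently a genuine proof of Conjecture~\ref{C5} seems to demand either a substantially sharper sieving inequality, strong enough to collapse the bound on $q$ from $10^{425}$ down to a computationally accessible size, or an entirely non-computational argument. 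Bridging the gap between the provable $q_0(8)\leq 2.27\times 10^{425}$ and the conjectured $q_0(8)=4476781$ is the crux of the difficulty.
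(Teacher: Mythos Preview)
Your analysis is accurate, and in fact it aligns with the paper's own stance: Conjecture~\ref{C5} is stated in the paper \emph{as a conjecture}, with no proof offered, based solely on the numerical search to $10^{8}$ mentioned at the start of Section~\ref{conj}. There is therefore no ``paper's own proof'' to compare against; your proposal correctly reconstructs the template that worked for $n=3$, correctly invokes the $n=8$ row of Table~\ref{Surletable}, and correctly diagnoses why the residual computation over $q\leq 2.27\times 10^{425}$ with $\omega(q-1)\leq 171$ is hopelessly out of reach---exactly the reason the authors leave this (and already $n=4$) as a conjecture.
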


\bibliographystyle{amsplain}
\bibliography{TriplePrimitive}

\end{document}